\documentclass[12pt]{amsart}
\usepackage{amsmath}
\usepackage{appendix}

\date{\today}
\usepackage{float}

\usepackage[left=1in,right=1in,top=1in,bottom=1in]{geometry}
\usepackage[switch*,pagewise]{lineno}
\usepackage{amsthm}
\usepackage{amscd,amssymb}
\usepackage[all]{xy}
\usepackage{color}
\usepackage{appendix} 
\usepackage{verbatim}
\usepackage{hyperref}
\usepackage{pdfsync}
\usepackage{mathrsfs}
\usepackage{tikz, wrapfig,array}
\usetikzlibrary{calc,patterns,
	decorations.pathmorphing,cd,
	decorations.markings, shapes.geometric, graphs, graphs.standard, quotes,shapes,chains,scopes,positioning,arrows}
\usepackage[normalem]{ulem}
\newcommand{\po}{\ar@{}[dr]|{\text{\pigpenfont R}}} \newcommand{\pb}{\ar@{}[dr]|{\text{\pigpenfont J}}}

\usepackage[noabbrev]{cleveref}
 \newtheorem{thm}{Theorem}[section]

 \newtheorem{lemma}[thm]{Lemma}
 \newtheorem{cor}[thm]{Corollary}
 \newtheorem{prop}[thm]{Proposition}
  
\theoremstyle{definition}
 
 \newtheorem{rmk}[thm]{Remark}
 \newtheorem{defn}[thm]{Definition}

\newcommand{\Hom}{{\rm Hom}}
\newcommand{\conn}{{\rm conn}}

\newcommand{\N}{{\rm N}}

\def\cG{\mathcal{G}}

\def\cC{\mathcal{C}}

\newcommand{\ifif}{if and only if }
\def\x{\times}
\def\xhe{$\x$\text{-homotopy equivalence}}

\def\wrt{\text{with respect to }}
\def\llp{\text{left lifting property}}

\def\hep{\text{homotopy extension property}}
\def\hlp{\text{homotopy lifting property}}

\hbadness=99999
\hfuzz=9999pt

\begin{document}
		\title[(Lack of)  Model Structures on $\cG$]{(Lack of)  Model Structures on the Category of Graphs}
	\date{}
	\author{Shuchita Goyal } 
	\address{Chennai Mathematical Institute} \email{ shuckriya.goyal@gmail.com} 
	\author{ Rekha Santhanam } \address{Dept. of Mathematics, Indian Institute of Technology Bombay} \email{ reksan@iitb.ac.in} 
	
	\begin{abstract}
	     In this article, we study model structures on the category of finite graphs with $\x$-homotopy equivalences as the weak equivalences. We show that there does not exist an  analogue of Str\o{}m-Hurewicz model structure on this category of graphs. More interestingly, we show that this category of graphs  with $\x$-homotopy equivalences does not have a model structure whenever the class of cofibrations is a subclass of  graph inclusions. 	\\ \\
	     { Keywords: } Model structures, Category of finite graphs, $\x$-homotopy equivalences. \\
     	MSC 2010: 55P99; 05C15.

	\end{abstract}
	
		\maketitle

	\section{Introduction}

Let $\cG$ denote the category whose objects are finite undirected graphs without multiple edges and morphisms are edge preserving functions on vertices.  The class of $\x$-homotopy equivalences in $\cG$ defines a class of weak equivalences on  the category of graphs, and we denote this category with weak equivalences as $(\cG, \x)$.  These equivalences were defined by Dochtermann in \cite{x-htpy}  while extending the work of Lov\'{a}sz, Babson, Kozlov  on Hom complexes of graphs.

	Hom complexes are of  interest as their connectivity often  gives a lower bound on the chromatic number of a graph.   
	A graph $T$ is a test graph if the following inequality holds for every graph $G$,
	\begin{equation}\label{lovasz conjecture}
		\chi(G) \ge \chi(T) + \conn(\Hom(T,G)) + 1.
	\end{equation}
	Here,  $\Hom(T,G)$ denotes the Hom complex of  graphs $T$ and $G$; and $\conn(X)$ is the smallest integer $n$, for which there exists a map from the $(n+1)$-sphere $S^{n+1} \to X$ which cannot be extended to a map from the $(n+2)$-disk $D^{n+2} \to X$. In particular, complete graphs, cycle graphs and complete bipartite graphs are all examples of  test graphs.
	
In general, the problem of computing Hom complexes can have high complexity \cite{pspace-complete}. 	Dochtermann \cite{x-htpy} showed that any graph map $G \to H$ is a $\x$-homotopy equivalence \ifif  the induced map on Hom complexes, $\Hom(T,G) \to \Hom(T,H)$,  is a homotopy equivalence for every graph $T$.	Our  main motivation for the present work is to  be  able to replace graphs with $\x$-homotopy equivalent graphs whose Hom complexes  would be  easier to compute.

Our goal is then to be able to write a graph as a pushout of smaller subgraphs 
such that this pushout is mapped to a homotopy pushout in spaces. In  \cite{Self-htpy}, we noted that given any graph $T$, the functor $\Hom(T,\underline{\ \ })$  maps the double mapping cylinder, $ B \sqcup_f( A\times I_m) \sqcup_g C$,  to $\Hom(T,B) \sqcup_{f_T} ( \Hom(T,A) \times I )\sqcup_{g_T} \Hom(T,C)$.
 In topological spaces, when the maps in the pushout diagram are Hurewicz cofibrations \cite[Proposition 6.2.6]{arkowitz}, the pushout is a homotopy pushout, that is, a pushout which is preserved (up to a weak equivalence) under weakly equivalent diagrams. 
However, there is no known criterion in graphs to  recognise when a given graph is $\x$-homotopic to a double mapping cylinder of smaller graphs.
Further, as explained in \cite{Self-htpy} the double mapping cylinder is not  a correct notion of homotopy pushout in ($\cG, \x$). 

A model structure on $(\cG,\x)$ will allow us to define the notion of homotopy pushout and will give a criterion to recognise when a pushout is a homotopy pushout (that is, pushouts where the maps are cofibrations). In order to resolve these questions, we require a model structure on $(\cG,\x) $ with the property that the $\Hom(T,\underline{\ \ })$ functor will map  homotopy pushouts in the category  of graphs to  homotopy pushouts in the category  of spaces.  The category of infinite graphs is known to have   model structures, albeit with different  choices of weak equivalences. 
Droz constructed different  model structures on the category of graphs for which the homotopy type of a graph was the set of its connected components \cite[Theorem 4.2]{quillen}, its furbished part \cite[Theorem 4.4]{quillen}, its corresponding core graph \cite[Theorem 4.13]{quillen}. 
 Matsushita used the usual model structure on $Top$ to construct a model structure on the category of graphs in \cite{matsushita-model-category} and showed that it is Quillen equivalent to the category of $\mathbb{Z}_2$-spaces. The class of weak equivalences considered in \cite{matsushita-model-category} is the class of maps that induce a $\mathbb{Z}_2$-homotopy equivalence on the box complex of graphs (which for a graph $G$, is a  simplicial complex homotopy equivalent to $\Hom(K_2,G)$).  The class of $\x$-homotopy equivalences is a proper subclass of this class of weak equivalences.
 
Since the notion of $\x$-homotopy equivalence resembles homotopy equivalence from {\it Top}, the most natural model structure to expect on $(\cG,\x)$  is the analogue of the Str\o{}m-Hurewicz model structure on topological spaces. 
In \cite{strom}, Str\o{}m showed the existence of a model structure on {\it Top} with closed Hurewicz cofibrations, Hurewicz fibrations and homotopy equivalences as the class of cofibrations, fibrations and weak equivalences, respectively. 
We show that there is  no analogue of the Str\o{}m-Hurewicz model structure  on $(\cG,\x)$ (cf. Theorem \ref{no hurewicz model structure}). 

Based on our motivation to break down a graph as a homotopy pushout of smaller graphs,  we want  the cofibrations in our model structure on $(\cG, \x)$ to be a subclass of inclusions (up to isomorphisms) in $\cG$. In Remark \ref{rem:induced}, we point out that in order to satisfy the required axioms, we need cofibrations to be induced inclusions. Further, we note that (cf. Remark \ref{rem:not all induced inclusions can be cofibs})  the class of cofibrations for any model structure on $(\cG, \x)$  cannot be the class of  all induced inclusions. In Proposition \ref{cobase change of bad folds result}, we show that if cofibrations are any subclass of induced inclusions in graphs, then every acyclic cofibration has to be a composition of unfolds. This result then allows us to prove Theorem \ref{cof as any subclass of induced inclusions then no model structure} which states that there is no model structure on $(\cG, \x)$ even if we restrict our class of cofibrations to a subclass of induced inclusions. The proof is based on the lack of compatibility between cofibrations and fibrations in the given scenario. 

We restrict ourselves to the category of finite graphs, since to reach  $\x$-homotopy equivalent graphs we only need a finite number of folds and unfolds. In the case of infinite graphs, there are other ways to show that $\x$-homotopy equivalences do not give a good class of weak equivalences to form a model structure. In both the scenarios, whether we consider the category of finite or infinite graphs, the main obstruction to having a model structure seems to be that acyclic cofibrations  will be a composition of unfolds. We study whether we can  instead have a cofibration category structure on $(\cG,\x)$ in a forthcoming article.

	\section{Homotopy Extensions in Graphs}

	A {\it graph} $G$ is a pair of sets $(V(G),E(G))$. The elements of $V(G)$ are called {\it vertices} of $G$, and  the elements of $E(G)$, which are  two-element subsets of $V(G)$, are called {\it edges} of $G$. These elements need not be distinct, that is, $\{y,y\}$ can also be an edge and this makes $y$ into a {\it looped vertex}. 
	We  say that two vertices  $x$, $y$ are {\it adjacent} if  $\{x,y\} \in E(G)$ and denote the edge by $xy$.  A  {\it simple graph} is a graph without any looped vertex and multiple edges. A {\it complete graph} is a simple graph where any two distinct vertices are adjacent to each other. A {\it finite graph} is a graph with a finite set of  vertices.
	A {\it graph map} between two graphs $f: G \to H$ is a function from $V(G)$ to $V(H)$ such that if $xy$ is an edge in $G$, then $f(x)f(y)$ is an edge in $H$.  
	
	\begin{defn}
	    Let $G , H \in \cG$ be two graphs. The (categorical) product of $G$ and $H$ is defined to be the graph $G \x H$ whose vertex set is the cartesian product, $V(G) \x V(H)$, and $(g,h)(g',h') \in E(G \x H)$ whenever $gg' \in E(G)$ and $\ hh' \in E(H) $.
	\end{defn}
	Let $G$ be a graph and $v \in V(G)$ be a vertex of $G$. The {\it neighbourhood set} of $v$ in $G$ is the set $\{x \in V(G): vx \in E(G)\}$, and is denoted by $\N_G(v)$. We drop the  subscript $G$ in  $\N_G(v)$, and we write $\N(v)$ whenever the choice of graph is clear.
	\begin{defn}\label{defn-fold,stiff,contractible}
		A vertex $v$ of a graph $ G$ is said to fold to a vertex $v' \in V(G)$ if every neighbour of $v$ is also a neighbour of $v'$, that is, $\N(v) \subseteq \N(v')$. We note that the map $f : G \to G-v$ that maps each vertex (other than $v$) of $G$ to itself, and $v$ to $v'$ is a graph map. In such a case, we call $f$ a fold map that folds  $G$ to $G-v$. If a graph folds to a single looped vertex via a sequence of fold maps, then it is called a contractible graph. Dual to a fold map, an unfold is defined to be the inclusion map $A \to A \cup \{v\}$ such that $\N_{A \cup \{v\}}(v) \subseteq \N_A(v')$ for some $v' \in V(A)$, where $v \notin V(A)$.
	\end{defn}
	
    \begin{defn}
   A graph is stiff if there is no fold in that graph.  By a  stiff subgraph of a graph $G$ we mean a subgraph of $G$ which is a stiff graph and can be obtained from $G$ via a sequence of folds.
     It is easy to observe that if $S \subset G$ is a stiff subgraph of $G$ and $S$ is simple, then $G$ is also a simple graph. 
     \end{defn}
     
	For $n \in \mathbb{N}$, let $I_n$ be the graph with vertex set $\{0,1,\dots,n\}$ and edge set $\{ij : |i-j| \le 1\}$. We note that the graph  $I_n$ folds down to a single looped vertex and hence is contractible for any $n \in \mathbb{N}$.

	\begin{defn} \cite[Definition 4.3]{x-htpy}
		Two graph maps $f,g : A \to B$ are said to be $\x$-homotopic if for some $n \in \mathbb{N}$, there exists a graph map $F : A \x I_n \to B$ such that $F|_{A\x \{0\}} = f$ and $F|_{A\x \{n\}} = g$. Two $\x$-homotopic maps are denoted as $f \simeq_{\x} g$. A graph map $f : A \to B$ is a $\x$-homotopy equivalence, if there exists a graph map $g: B \to A$ such that $gf \simeq_{\x} 1_A$ and $fg \simeq_{\x} 1_B$, where $1_X$ denotes the identity map on $X$.
	\end{defn}
	
 It is easy to see that the fold and unfold maps are $\x$-homotopy equivalences. Thus every graph is $\x$-homotopy equivalent to its stiff subgraph. Moreover, if $f : A \to B$ is a $\x$-homotopy equivalence between stiff graphs $A$ and $B$, then $f$ is an isomorphism \cite[Thm 6.6]{x-htpy}. 
\begin{lemma}\label{stiff copy}
Let $f: G \rightarrow H$ be a $\x$-homotopy equivalence of graphs. Let  $G'$ and $H'$ be stiff subgraphs of $G$ and $H$ respectively. Then there exists a commuting diagram  as shown below, where $f' : G' \rightarrow H'$ an isomorphism and the vertical maps are a sequence of folds to their respective stiff subgraphs. 
\xymatrix{ G \ar[r]^f \ar[d] & H \ar[d]\\
           G' \ar[r]^{f'}& H'   }
\end{lemma} 
\begin{proof}
The graph map $f': G' \to H' $ is defined as the composition of  the sequence of unfolds from $G'$ to $G$, $f: G \to H$ and sequence of folds from $H$ to $H'$. Then $f':G' \to H'$ is a $\x$-homotopy equivalence between stiff graphs and by \cite[Thm 6.6]{x-htpy} $f'$ is an isomorphism.
 \end{proof}
 
Let $Top$ denote the category of topological spaces with continuous functions.  Recall that if $A$ and $B$ are topological spaces, then a continuous map $i : A \to B$ that has the \hep  \  with respect to every $Z \in Top$ is called a Hurewicz cofibration.

	Dual to the notion of \hep, there is \hlp; \  and a continuous map $p : X \to Y$ in $Top$ with the \hlp \  with respect to every space $Z \in Top$ is called a Hurewicz fibration. Before we proceed further, we give the definition of a model category. 

		\begin{defn}\cite{dwyer}
		A category $\cC$ with three distinguished classes of morphisms - cofibrations, fibrations and weak equivalences is said to be a model category  if it satisfies the following axioms:
		\begin{enumerate}
			\item (Bicompleteness) All the finite limits and finite colimits exists in the category.
			\item (Retract) If $f$ and $g$ are morphisms of the category such that $f$ is a retract of $g$ and $g$	is a weak equivalence, cofibration, or fibration, then so is $f$.
			\item (2 out of 3) If $f$ and $g$ are morphisms of the category such that $gf$ is defined and two
			of $f,\ g$ and $gf$ are weak equivalences, then so is the third.
			\item (Lifting) Acyclic cofibrations\footnote{A morphism that is both a weak equivalence and a cofibration is called an acyclic cofibration.} have	the left lifting property with respect to fibrations, and cofibrations have the left lifting property with respect to acyclic fibrations\footnote{A morphism that is both a weak equivalence and a fibration is called an acyclic fibration.}.
			\item (Factorization) Any morphism of the category can be factored as a cofibration followed by an acyclic fibration, and as acyclic cofibration followed by a fibration.
		\end{enumerate}
		By a model structure on the category we mean a choice of class of cofibrations, fibrations and weak equivalences  satisfying the above properties.
	\end{defn}

	Str\o{}m showed that  $Top$ has a model structure (cf. \cite[Theorem 11]{strom}) with closed Hurewicz cofibrations, Hurewicz fibrations and homotopy equivalences as the class of  cofibrations, fibrations and weak equivalences respectively. We now study analogous constructions in the category of graphs,  $\cG$.
	
	\begin{defn}
Let $A$ and $B$ be graphs and  the maps $j_0: A \to A \times I_n$, $k_0:B\to B\times I_n$ denote graph maps that take an element $x$ to $(x,0)$, for $x$ in $A$ and $B$, respectively. A graph map $i :  A \to B$ is said to have the $\x$-homotopy extension property if for any graph $Z \in \cG$ with graph maps $f : A \to Z, g : B \to Z$, and a $\x$-homotopy $F : A \x I_n \to Z$ satisfying $k_0 i = (i \times 1_{I_n})j_0,   f = F j_0$ and $gi=f$, there exists a $\x$-homotopy $G : B \x I_n \to Z$ that extends $g$, that is, $G (i \times 1_{I_n}) = F$ and $G k_0 = g$.  Equivalently, we require that the dotted arrow should exist for every commutative diagram as in Figure \ref{HEP in G}. 
	
	\begin{figure}[h]
		\centering
		\begin{tikzcd}
			A \arrow[dd, "i"'] \arrow[rr, "j_0"] \arrow[rd, "f"] &  & A \times I_n \arrow[dd, "i \times 1_{I_n}"] \arrow[ld, "F"'] \\
			& Z &  \\
			B \arrow[rr, "k_0"'] \arrow[ru, "g"] &  & B \times I_n \arrow[lu, "G"', dotted]
		\end{tikzcd}
		\caption[x-homotopy extension property in G]{The $\x$-homotopy extension property in $\cG$}
		\label{HEP in G}
	\end{figure}

	Similarly, a graph map $p:X \to Y$  is said to have  the   $\x$-homotopy lifting property if for any graph $A$ and a commutative diagram as in Figure \ref{fig:HLP in G}, there exists a diagonal $G : A \x I_n \to X$ such that $G i_0 = f$ and $pG = g$, where $i_0(a) = (a,0)$.
	
	\begin{figure}[h]
	    \centering
	    \begin{tikzcd}
        A \arrow[r, "f"] \arrow[d, "i_0"']                  & X \arrow[d, "p"] \\
        A \times I_n \arrow[r, "g"] \arrow[ru, "G", dotted] & Y         
        \end{tikzcd}
	    \caption[x-homotopy lifting property in G]{The $\x$-homotopy lifting property in $\cG$}
	    \label{fig:HLP in G}
	\end{figure}
	\end{defn}

By a {\it Str\o{}m-Hurewicz type model structure on $(\cG,\x)$}, we mean a model structure on  $\cG$ where, cofibrations have the $\x$-homotopy extension property, fibrations have the $\x$-homotopy lifting property, and weak equivalences are $\x$-homotopy equivalences.
	
	Let $A , B \in \cG$ be two graphs and  $i : A \to B$ be a graph map. Then the graph $A$ is called a {\it retract} of $B$ if there exists a graph map $r : B \to A$ such that $ri = 1_A$. We call this map $r : B \to A$, a retraction of $B$ onto $A$. We note that if $A$ is a retract of $B$, then $A $ is a subgraph, up to a relabelling of vertices of $A$, of $B$.
	
	Let $G \in \cG$ be a graph, and $f : A \to G$ be a graph map. The \textit{quotient} of $G$ by $f(A)$, denoted by $G / f(A)$ is the graph defined as 
	\noindent $$V(G/f(A))=  V(G)/ V(f(A)), \ 
	E(G/f(A)) = \{ \{[x],[y]\} \subseteq  V(G)/ V(f(A)) \ | \ xy \in E(G)\}.$$

	Let $i:A\to B$ be a graph map. For $n\in\mathbb{N}$, let $(A\x I_n)\bigsqcup\limits_i B$ be the quotient graph such that $V\Big ((A\x I_n )\bigsqcup\limits_i B \Big ) = \Big (V(A\x I_{n})\cup V(B) \Big )/i(a)\sim (a,0)$ and
	$[x][y]\in E\Big ((A\x I_n )\bigsqcup\limits_i B \Big )$ if and only if there exist $x'\in [x], y'\in [y]$ such that
	$x'y'\in E(A\x I_n)\cup E(B)$.\\
	
	Let  $I$ denote the unit interval $[0,1]$ in $Top$. We recall that  a closed continuous map $i : A \to B$ is a closed Hurewicz cofibration if and only if $(A \x I) \cup (B \x \{0\} )$ is a retract of $B \x I$. The analog in graphs is true as well.

	\begin{thm}
		A graph map $i : A \to B$ has the $\x$-homotopy extension property if and only if $(A\x I_n)\bigsqcup\limits_i B$ is a retract of $B \x I_n$, for every $n \in \mathbb{N}$.
	\end{thm}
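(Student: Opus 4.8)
The plan is to recognise $(A\x I_n)\bigsqcup_i B$ as the pushout in $\cG$ of the span $B \xleftarrow{\,i\,} A \xrightarrow{\,j_0\,} A\x I_n$, so that it carries a canonical comparison map $\iota\colon (A\x I_n)\bigsqcup_i B \to B\x I_n$ induced by $i\x 1_{I_n}\colon A\x I_n \to B\x I_n$ and $k_0\colon B \to B\x I_n$; these two agree on $A$ because $(i\x 1_{I_n})\circ j_0 = k_0\circ i$, so $\iota$ is well defined (the only identifications made in the quotient are $i(a)\sim(a,0)$, which $\iota$ visibly respects). The phrase ``$(A\x I_n)\bigsqcup_i B$ is a retract of $B\x I_n$'' then means: there is a graph map $r\colon B\x I_n \to (A\x I_n)\bigsqcup_i B$ with $r\circ\iota = \mathrm{id}$ (note that such an $r$ forces $\iota$ to be injective, consistent with the fact that an $i$ which is not injective on vertices cannot have $\hep$). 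I would first record the universal property of this pushout in $\cG$: any pair of graph maps $A\x I_n \to W$ and $B \to W$ agreeing after precomposition with $j_0$ and with $i$ factors uniquely through $(A\x I_n)\bigsqcup_i B$; the whole argument is a translation between that property and Figure~\ref{HEP in G}.

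For the forward implication, assume $i$ has the homotopy extension property and apply it to the ``universal'' test datum: take $Z = (A\x I_n)\bigsqcup_i B$, and let $g\colon B \to Z$ and $F\colon A\x I_n \to Z$ be the two canonical maps into the pushout (so $f = g\circ i = F\circ j_0$, as the diagram demands). Then $\hep$ supplies $G\colon B\x I_n \to Z$ with $G\circ k_0 = g$ and $G\circ(i\x 1_{I_n}) = F$. Precomposing $\iota$ with the canonical inclusion of $B$ gives $k_0$, and with the canonical inclusion of $A\x I_n$ gives $i\x 1_{I_n}$; hence $G\circ\iota$ agrees with $g$ on the image of $B$ and with $F$ on the image of $A\x I_n$, i.e. $G\circ\iota = \mathrm{id}_Z$ on both pieces generating the pushout. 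So $G$ is a retraction of $\iota$, for that very $n$.

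For the converse, suppose $r\colon B\x I_n \to (A\x I_n)\bigsqcup_i B$ retracts $\iota$, and consider an instance of the extension problem with target $W$: graph maps $g\colon B\to W$ and $F\colon A\x I_n \to W$ with $F\circ j_0 = g\circ i$. The universal property yields $\phi\colon (A\x I_n)\bigsqcup_i B \to W$ restricting to $F$ on $A\x I_n$ and to $g$ on $B$. Then $G := \phi\circ r\colon B\x I_n \to W$ solves the problem: from $r\circ\iota = \mathrm{id}$ one gets that $r\circ k_0$ and $r\circ(i\x 1_{I_n})$ are the canonical inclusions of $B$ and of $A\x I_n$, whence $G\circ k_0 = \phi\circ r\circ k_0 = g$ and $G\circ(i\x 1_{I_n}) = \phi\circ r\circ(i\x 1_{I_n}) = F$.

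The point that needs genuine care — and, I expect, the main obstacle — is the clash of quantifiers: the retraction is asked for \emph{some} $n$, whereas $\hep$ must be checked against $\x$-homotopies $A\x I_m \to Z$ of \emph{every} length $m$ (unlike in $Top$, where $I\x I\cong I$ hides this). Two reparametrisation lemmas bridge the gap. First, a retraction for $n_0$ yields one for every $1\le n\le n_0$: the fold $I_{n_0}\to I_n$, $t\mapsto\min(t,n)$, and its section $I_n\hookrightarrow I_{n_0}$ induce maps back and forth between $(A\x I_{n_0})\bigsqcup_i B$ and $(A\x I_n)\bigsqcup_i B$ compatible with the comparison maps $\iota$, and a retraction for $n$ is obtained by conjugating $r$ with them. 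Second, a solution of the length-$1$ extension problem can be iterated along the path: a homotopy $F\colon A\x I_m \to Z$ is the concatenation of its $m$ restrictions to the edges $\{j,j+1\}\cong I_1$, and one extends these one step at a time, using at the $j$-th step the map $B\x\{j\}\to Z$ produced by the previous step as the new bottom; gluing the $m$ partial extensions gives $G\colon B\x I_m \to Z$, and the glued map is a graph map precisely because consecutive partial extensions agree on $B\x\{j\}$. Chaining these, ``retraction for some $n\ge 1$'' $\Rightarrow$ ``retraction for $n=1$'' $\Rightarrow$ ``$\hep$ for homotopies of length $1$'' $\Rightarrow$ ``$\hep$'', finishing the converse; the degenerate case $n=0$, where the extension problem is vacuous, is handled separately. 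As a byproduct one sees that ``retraction for some $n\ge1$'', ``retraction for every $n\ge1$'', and $\hep$ are all equivalent.
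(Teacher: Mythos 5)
Your proof is correct and follows essentially the same route as the paper: for the ``only if'' direction you test the extension property against $Z=(A\x I_n)\bigsqcup_i B$ itself with its canonical maps, and for the ``if'' direction you use the pushout's universal property to assemble the map $\phi$ (the paper's $H$) and set $G=\phi\circ r$. The only difference is your explicit handling of the quantifier on $n$ (shrinking a retraction down to length $1$ and iterating length-$1$ extensions to treat homotopies of arbitrary length), a subtlety the paper's proof passes over by tacitly using the same $n$ for the retraction and for the homotopy being extended --- a worthwhile clarification rather than a change of method.
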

	
	\begin{proof}
		 Let $i : A \to B$ have the  $\x$-homotopy extension property, that is, for any graph $Z$ and maps $f, F$, and $g$ as in Figure \ref{HEP in G}, there exists a $\x$-homotopy extension $G$. Let $n\in \mathbb{N}$ be fixed, choose $Z = (A\x I_n)\bigsqcup\limits_i B$ together with $F : A \x I_n \to Z$ that sends any $(a,j) \in A \x I_n$ to $(a,j) \in (A \x I_n) \cup B$, $f = F|_{A \x I_0}$ and $g : B \to Z$ as $g(b) = b$ for all $b \in B$. By assumption, there exists $G : B \x I_n \to (A\x I_n)\bigsqcup\limits_i B$ such that $G|_{(A\x I_n)\bigsqcup\limits_i B} = 1_{(A\x I_n)\bigsqcup\limits_i B}$. Therefore, $G$ is a retraction of $B \x I_n$ onto $(A\x I_n)\bigsqcup\limits_i B$.
		
	 Next, let $Z\in\cG$ be any graph with $f:A\to Z$,
		$g:B\to Z$ and $F:A\x I_n\to Z$ be such that $F(a,0)=f(a)$. Let $r:B\x I_n\to (A\x I_n)\bigsqcup\limits_i B$ be a retraction. Define the map  $H:(A\x I_n)\bigsqcup\limits_i B\to Z$ as $H(a,t)=F(a,t)$ for all $a\in A$, $t\in I_n$, and $H(b)=g(b)$    for all $b\in B$. Then $H(a,0)=F(a,0)=f(a)=gi(a)$. Since
		$[(a,0)]=[i(a)]$ in $(A\x I_n)\bigsqcup\limits_i B$, $Hi(a)=gi(a)$.    Since $i(a)\in B$, $H$ is a well defined map. Also, by construction it is a graph map. We define the map $G:B\x I_n\to Z$ as $G=Hr$, then $G$ is a $\x$-homotopy that extends $g:B\to Z$.
	\end{proof}
	
	Let $i : A \to B$ be a graph map. It is easy to see that if $i$ is an isomorphism or  $B$ is  a disjoint union of $A$ and $B - Im(i)$, then $i$ has the $\x$-homotopy extension property. Unlike in $Top$, the converse is also true in $\cG$.
		
	\begin{lemma}
		Let $i : A \to B$ be a graph map such that $i$ is not an isomorphism and $B \ne A \sqcup (B - Im(i))$. Then there does not exist a retraction $r : B \x I_n \to (A\x I_n)\bigsqcup\limits_i B$.
	\end{lemma}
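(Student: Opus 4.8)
The plan is to suppose, toward a contradiction, that a retraction $r : B\x I_n \to P$ exists, where $P = (A\x I_n)\bigsqcup\limits_i B$ and $j : P \to B\x I_n$ is the canonical map (sending the class of $b\in V(B)$ to $(b,0)$ and the class of $(a,t)\in V(A\x I_n)$ to $(i(a),t)$), so that $rj = 1_P$; and then to exhibit a single edge of $B\x I_n$ whose $r$-image is not an edge of $P$. Since everything is pinned down by $rj = 1_P$, there is essentially no freedom to work with. First I would dispose of the non-injective case: if $i(a) = i(a')$ with $a\ne a'$, then (using $n\ge 1$) the distinct vertices $(a,1)$ and $(a',1)$ of $P$ both map under $j$ to $(i(a),1)$, contradicting $rj = 1_P$; so $i$ is injective, and we identify $A$ with the subgraph $i(A)\subseteq B$ (which need not be induced).

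Next I would extract the combinatorial content of $B\ne A\sqcup(B-A)$. Because $E(A)\subseteq E(B)$ and $V(B) = V(A)\sqcup V(B-A)$ hold automatically, the failure of $B = A\sqcup(B-A)$ means there is a ``bad'' edge $uv\in E(B)$: either (a) $u\in A$ and $v\notin A$, or (b) $u,v\in A$ (possibly $u=v$, i.e.\ a loop) with $uv\notin E(A)$. In both cases we may take $u\in A$. (Note that, once $i$ is injective, the hypothesis $B\ne A\sqcup(B-A)$ already forces $i$ not to be an isomorphism, so the bad edge always exists under the stated hypotheses.)

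The core step is local. Look at the vertices $(v,0)$ and $(u,1)$ of $B\x I_n$: they are adjacent since $uv\in E(B)$ and $|0-1|\le 1$. From $rj = 1_P$ we get $r(v,0) = [v]$ and $r(u,1) = (u,1)$, where the latter is a genuine ``collar'' vertex of $P$ (its $I_n$-coordinate is $1\ne 0$, so it is not glued to anything in $B$). Hence, for $r$ to be a graph map, $[v](u,1)$ must be an edge of $P$. But by the definition of the quotient graph, an edge of $P$ at $(u,1)$ can only come from an edge of $A\x I_n$ at $(u,1)$, i.e.\ from a vertex $(a',t')\in [v]\cap V(A\x I_n)$ with $a'u\in E(A)$ and $|t'-1|\le 1$. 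In case (a), $[v] = \{v\}$ with $v\notin A$ meets $V(A\x I_n)$ not at all; in case (b), the only element of $[v]\cap V(A\x I_n)$ is $(v,0)$, which would require $vu\in E(A)$ — precisely what fails. So $[v](u,1)\notin E(P)$, the contradiction, and this works for every $n\ge 1$.

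I expect the genuine difficulty to be locating this configuration rather than verifying it. The tempting first move is a connected-component count — comparing the number of components of $B\x I_n$ with that of $P$ — and while that does kill the case where $A$ has an isolated unlooped vertex, it is useless when, say, $A$ is a single looped vertex sitting inside a connected $B$, so a truly local obstruction is required. Once one sees that a bad edge $uv$ chains the forced value $r(v,0)=[v]$ to the forced value $r(u,1)=(u,1)$ through an edge of $B\x I_n$, and that collar vertices of $P$ are adjacent only to (copies of) vertices of $A$, the rest is routine bookkeeping in the quotient graph; the only other caveat worth stating is that the claim is meant for $n\ge 1$, since for $n=0$ one has $P\cong B$ and the identity is a retraction.
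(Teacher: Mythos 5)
Your proof is correct, and its core is exactly the paper's argument: pick the edge $(v,0)(u,1)$ of $B\x I_n$ coming from a ``bad'' edge $uv$ of $B$, use $rj=1$ to force $r(v,0)=[v]$ and $r(u,1)=(u,1)$, and observe that the collar vertex $(u,1)$ is adjacent in the quotient only to classes meeting $A\x I_n$ in a neighbour of $u$ within $A$, which is impossible. The difference is in how the bad edge is produced: the paper immediately asserts that the hypotheses yield $a\in A$ and $b\in B\setminus i(A)$ with $i(a)b\in E(B)$ --- which is your case (a), and is the only possibility when $i$ is an induced subgraph inclusion, the situation the paper ultimately cares about --- whereas you also dispose of non-injective $i$ (via $j(a,1)=j(a',1)$ contradicting $rj=1_P$) and of the case where the failure of $B=A\sqcup(B-A)$ is an edge inside $i(A)$ that is not an edge of $A$, your case (b). So your argument is slightly more general and in fact covers configurations (e.g.\ $i(A)$ spanning all of $V(B)$ but not induced) that the paper's first sentence does not literally justify under the stated hypotheses. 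Your caveat that the statement requires $n\ge 1$ (for $n=0$ one has $(A\x I_0)\bigsqcup_i B\cong B\cong B\x I_0$) is also right and is implicit in the paper's use of the level-$1$ vertex.
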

	
	\begin{proof}
		Since  $i$ is not an isomorphism and $B \ne A \sqcup (B - A)$, there exist $a\in A$, $b\in B\setminus i(A)$ such that $i(a)$ is adjacent to $b$ in $B$. Suppose there exists a retraction $r:B\x I_n\to (A\x I_n)\bigsqcup\limits_i B$. Then $(b,0)$ is adjacent to $(i(a),1)$ in $B\x I_n$. Since $r$ is a graph map, $r(b,0)$ is adjacent to $r(i(a),1)$ in
		$(A\x I_n)\bigsqcup\limits_i B$. However, $rj=1_{(A\x I_n)\bigsqcup\limits_i B}$ where $j:(A\x I_n)\bigsqcup\limits_i B\to B\x I_n$ is an inclusion. Therefore $r(b,0)=(b,0)$ and $r(i(a),1)=(i(a),1)$. But $(b,0)$ is not adjacent to $(i(a),1)$ in $(A\x I_n)\bigsqcup\limits_i B$.
		Hence, there does not exist any $r:B\x I_n\to (A\x I_n)\bigsqcup\limits_i B$ such that $rj=1_{(A\x I_n)\bigsqcup\limits_i B}$.    
	\end{proof}

	If there exists a model structure on a category where two of the three special classes of morphisms are chosen to be full morphism class of the category, and the third one is chosen to be isomorphisms in that category, then such a model structure is often called a \textit{trivial} model structure on the category. 
	Droz has shown that:
	
	\begin{thm}[{Droz, \cite[Theorem 4.1]{quillen}}]
		There exists a trivial model structure on the category of graphs, $\cG$, with the class of cofibrations as the isomorphisms of $\cG$ . 
	\end{thm}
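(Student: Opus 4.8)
The plan is to write down the model structure explicitly and verify the axioms, each of which collapses to a formal fact about isomorphisms. Concretely, I would take the weak equivalences and the fibrations both to be the class of \emph{all} graph morphisms and the cofibrations to be the isomorphisms of $\cG$; with these choices the trivial cofibrations are the isomorphisms and the trivial fibrations are all morphisms. (One checks that, once the cofibrations are declared to be the isomorphisms, the remaining two classes are forced, which is what the adjective \emph{trivial} records.)

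First I would note that $\cG$ is complete and cocomplete, which supplies the axiom on existence of all small limits and colimits. Small limits are computed on vertex sets as in the category of sets, a tuple of vertices being adjacent exactly when each of its coordinate projections is adjacent; this recovers the categorical product $G \x H$ defined above and realizes equalizers as induced subgraphs, while small colimits are built from disjoint unions (coproducts) and from quotient graphs formed just as $G / f(A)$ was above, now modulo the appropriate generated equivalence relation on vertices (coequalizers). The two-out-of-three and retract axioms are then immediate: the weak equivalences and fibrations consist of every morphism of $\cG$, so they satisfy both vacuously, while a retract of an isomorphism is again an isomorphism by the standard diagram chase, which handles the cofibrations.

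For the lifting axiom, in each of the two required cases the class that lifts on the left consists of isomorphisms, and an isomorphism $i \colon A \to B$ has the left lifting property with respect to every graph morphism: given a commuting square with $i$ on the left and an arbitrary $p \colon X \to Y$ on the right, the filler $B \to X$ is the composite of $i^{-1}$ with the top morphism $A \to X$, and both resulting triangles commute by commutativity of the square. For the factorization axiom, any morphism $f \colon X \to Y$ is $f = f \circ 1_X$; reading $1_X$ as the cofibration and $f$ as the trivial fibration gives one of the two required factorizations, and reading $1_X$ as the trivial cofibration and $f$ as the fibration gives the other. This completes the verification. There is no genuine obstacle in this argument: the only substantive input is the (co)completeness of $\cG$, and the one point deserving care is orienting the lifting and factorization statements in the directions the axioms demand.
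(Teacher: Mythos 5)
The paper does not prove this statement at all --- it is quoted from Droz \cite[Theorem 4.1]{quillen} as a citation --- so there is no internal argument to compare against; judged on its own, your verification is correct and is the standard one. Taking weak equivalences and fibrations to be all morphisms and cofibrations to be the isomorphisms, the axioms do collapse exactly as you say: (co)completeness of $\cG$ via products/induced-subgraph equalizers and disjoint unions/quotients, two-out-of-three and retracts trivially (retracts of isomorphisms being isomorphisms), lifting via the filler $f\circ i^{-1}$ for an isomorphism $i$, and both factorizations as $f = f\circ 1_X$; your parenthetical that the other two classes are forced once cofibrations are the isomorphisms is also correct, though not needed for the verification.
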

	
	Since we are interested in finding the cofibrant replacement of a graph with respect to $\x$-homotopy equivalences as the class of weak equivalences, this structure is not useful for us. Let the class of cofibrations $A\to B$ be isomorphisms or of the form $A \to A \sqcup B'$ and weak equivalences be $\x$-homotopy equivalences. Then regardless of our class of fibrations, it is not possible to factor graph maps $X \to Y$, where both $X$ and $Y$ are connected, as a cofibration followed by an acyclic fibration unless the map itself is a $\x$-homotopy equivalence. Thus we have proved the following theorem.

	\begin{thm}\label{no hurewicz model structure}
There does not exist a Str\o{}m-Hurewicz type model structure on $(\cG, \x)$.
	\end{thm}
	
This theorem is a special case of Theorem \ref{cof as any subclass of induced inclusions then no model structure} which we prove later in the paper. We include  the statement for \Cref{no hurewicz model structure} here since the above argument is  straightforward and does not need the techniques used in the proof of Theorem \ref{cof as any subclass of induced inclusions then no model structure}.

In the next section, we give a characterization of the class of cofibrations given any model structure on $(\cG,\x)$ under mild assumptions.

\section{Criteria for cofibrations in \texorpdfstring{$(\cG,\x)$}{(G,x)}}

 In any model category, the class of fibrations and the class of cofibrations determine each other completely \cite[Proposition 3.13]{dwyer} when we fix the class of weak equivalences. In this section, we analyse the possible classes of maps which would give a compatible choice of cofibrations. In line with our original motivation of replacing graphs with smaller graphs, the  only restriction we impose on the class of cofibrations is that they be inclusions. 

	 We note  that in a model category \cite[Proposition 3.14]{dwyer}, an acyclic cofibration should be preserved under the cobase change along any morphism. Since a $\x$-homotopy equivalence is a composition of folds, unfolds and isomorphisms, we next analyse the behaviour of such maps and inclusions under the cobase change along an arbitrary graph map.
	 
	 	\begin{lemma}\label{cobase changes}
	 	Let $i : A \to B$ be a graph map. If $i$ is an isomorphism, inclusion, induced subgraph inclusion or an unfold then so is the cobase change of $i$ along any graph map.
	 \end{lemma}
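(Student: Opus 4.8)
The plan is to compute the cobase change explicitly. Given $i\colon A\to B$ and an arbitrary graph map $f\colon A\to C$, the pushout $P$ of $i$ along $f$ has vertex set $\big(V(B)\sqcup V(C)\big)/\!\sim$, where $\sim$ is generated by $(i(a),B)\sim(f(a),C)$ for $a\in V(A)$, and $[x][y]\in E(P)$ exactly when there are representatives $x'\in[x]$, $y'\in[y]$ with $x'y'\in E(B)\cup E(C)$; the cobase change is the canonical map $i'\colon C\to P$, $c\mapsto[c]$. The case where $i$ is an isomorphism is purely formal, since isomorphisms are stable under pushout in any category, so I would dispose of it in one line and concentrate on the other three cases.

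The common first step for inclusions, induced inclusions and unfolds is a bookkeeping observation: when $i$ is injective on vertices, the map $V(C)\to V(P)$ is injective. Each generating relation links exactly one vertex of the $B$-copy to one vertex of the $C$-copy, so any $\sim$-chain alternates between the two copies; starting from $(c,C)$, the first step reaches some $(i(a),B)$ with $f(a)=c$, and the only way to re-enter the $C$-copy is via $(f(a'),C)$ with $i(a')=i(a)$, forcing $a'=a$ and hence returning to $(c,C)$. Thus the class of $(c,C)$ is $\{(c,C)\}\cup\{(i(a),B):f(a)=c\}$, the map $V(C)\to V(P)$ is injective, and $i'$ realises $C$ as a subgraph of $P$. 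This already settles the \emph{inclusion} case, since an injective graph map is a subgraph inclusion up to isomorphism.

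For the \emph{induced subgraph inclusion} case I must further check that $P$ creates no new edges among the vertices of $i'(C)$. Suppose $[c_1][c_2]\in E(P)$, witnessed by representatives $x',y'$ with $x'y'\in E(B)\cup E(C)$. If $x'y'\in E(C)$, injectivity forces $x'=c_1$, $y'=c_2$, so $c_1c_2\in E(C)$. If $x'y'\in E(B)$, then $x'=i(a_1)$, $y'=i(a_2)$ with $f(a_j)=c_j$; since $i$ is an induced inclusion, $a_1a_2\in E(A)$, hence $c_1c_2=f(a_1)f(a_2)\in E(C)$. The loop sub-case $[c_1]=[c_2]$ is handled by the same alternatives. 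Therefore $i'$ is an induced subgraph inclusion. For the \emph{unfold} case, write $B=A\cup\{v\}$ with $\N_B(v)\subseteq\N_A(v')$ for some $v'\in V(A)$, and note $\N_B(v)\subseteq\N_A(v')\subseteq V(A)$ so that $v$ is unlooped. Since $i$ is the inclusion of $V(A)$, the relation $\sim$ only glues $a\sim f(a)$ for $a\in V(A)$ and never touches $v$; hence $V(P)=i'(V(C))\sqcup\{[v]\}$ and $i'$ is the inclusion $C\hookrightarrow C\cup\{v\}=P$. It remains to identify $\N_P(v)$: a vertex $[c]$ is adjacent to $[v]$ iff some $a\in V(A)$ with $f(a)=c$ satisfies $va\in E(B)$, so $\N_P(v)=\{f(a):a\in\N_B(v)\}$; and $a\in\N_B(v)\subseteq\N_A(v')$ gives $av'\in E(A)$, hence $f(a)f(v')\in E(C)$, so $\N_P(v)\subseteq\N_C(f(v'))$. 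Thus $i'$ is an unfold, adjoining $v$ to $C$ with witness $f(v')$.

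The main obstacle is not conceptual but organisational: one must describe the equivalence classes of $\sim$ carefully in the generality where $f$ need not be injective, and in the induced-inclusion case one must verify that the pushout produces no spurious edges among the image of $C$ — this is precisely where the hypothesis "induced" is used, and it is the step most likely to hide an error. The unfold case likewise hinges on correctly computing the neighbourhood of the newly adjoined vertex inside the pushout, which again rests on the same analysis of the classes of $\sim$.
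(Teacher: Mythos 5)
Your proof is correct and follows essentially the same route as the paper's: a case-by-case verification on the explicit pushout, with the isomorphism case formal, injectivity giving the inclusion case, the ``no new edges among the image of $C$'' check using the induced hypothesis, and the unfold case settled by computing the neighbourhood of the adjoined vertex with witness $f(v')$. Your explicit description of the equivalence classes of $\sim$ is simply a more carefully spelled-out version of the injectivity argument the paper gives more tersely.
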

	 
	 \begin{proof}
	 	Let $C \in \cG$ be any graph and $f : A \to C$ be a graph map. Let $G \in \cG$ be the pushout of $\{f,i\}$ as shown in Figure \ref{pushout in graphs}.
	 	
	 	\begin{figure}[h]
	 		\centering
	 		\begin{tikzcd}
	 			A \arrow[r, "f"] \arrow[d, "i"] & C \arrow[d, "i'"] \\
	 			B \arrow[r, "f'"'] & G
	 		\end{tikzcd}
	 		\caption{$G$ = Pushout of $\{f,i\}$}
	 		\label{pushout in graphs}
	 	\end{figure}
	 	
	 	It is an easy observation that the cobase change of an isomorphism along any graph map is also an isomorphism. Since set inclusions are preserved under pushouts, the cobase change of a graph inclusion along a graph map will be a graph inclusion. 
		 	
	 	Next, let $i$ be an induced subgraph inclusion, that is, for every $ab \in E(B)$, if $a,b \in V(A)$, then $ab \in E(A)$. As noted earlier $i'$ is an inclusion and therefore it is enough to show that if $[x][y] = i'(x)i'(y) \in E(G)$ for some $x,y \in V(C)$, then $xy \in E(C)$. Again if one of $x$ or $y$ does not belong to  $f(A)$, then $[x][y] \in E(G)$ implies $xy \in E(C)$. So, let $x,y \in f(A)$, that is, for some $a,b \in V(A)$, $f(a) = x$ and $f(b) = y$. Since $[f(a)] = [x]$ is adjacent to $[y] = [f(b)]$, either $i(a)i(b) \in E(B)$ or $xy \in E(C)$. Assume that $i(a)i(b) \in E(B)$ then $i$ being induced subgraph inclusion implies that $ab \in E(A)$. Given that $f$ is a graph map, $ab \in E(A)$ gives $f(a) f(b) = xy \in E(C)$.
	 	
	 	Let $B = A \cup \{b\}$ and there exist $a \in V(A)$ such that $b$ folds  to $a$ in  $B$, that is, $\N_B(b) \subseteq \N_B(a)= \N_A(a)$ (or $\N_A(a)\cup \{b\}$ if $b$ is looped).  Then the pushout graph is $G $ with $V(G) = V(C) \cup \{b\}$ with vertices $x \in V(A)$ identified with $f(x) \in V(C)$. Since an unfold graph map is an induced inclusion, we know that $i'$ is an induced subgraph inclusion. Further, $G$ is a pushout implies that the neighbours of $a$ in $A$ will get identified with neighbours of $f(a)$ in $C$. Therefore,  $\N_G([a])=\N_G([f(a])=\N_C(f(a))$.  Note that $\N_G([b])=f'(\N_B(b))$. Since $\N_B(b)\subseteq \N_A(a)$ (or $N_A(a) \cup\{b\}$ if $b$ is looped) and  $f'$ is a graph map, we get that  $\N_G([b]) =f'(\N_B(b))  \subseteq f'(\N_A(a))$ (or $f'(N_A(a) \cup \{b\}$ if $b$ is looped).  Now $f'(\N_A(a)) =f(\N_A(a)) \subseteq \N_C(f(a))=\N_G([f(a)])$ (or $f'(\N_A(a)) \cup \{b\} \subseteq \N_C(f(a)) \cup \{b\} =\N_G([f(a)]$ if $b$ is looped). Hence $b$ folds to $[f(a)]$ in $G$.	 \end{proof}

		\begin{rmk}\label{rem:induced}
				We first observe that every cofibration must be an induced inclusion. 	Given  a $\x$-homotopy equivalence $i:A\to B$ which is not an induced inclusion, we can always construct a graph map $g : A \to \widetilde{C}_{4}$ (where the graph $\widetilde{C}_{4}$ denotes the 4-cycle having loop at each vertex)  along which the cobase change of $i$ will not be a $\x$-homotopy equivalence. Since $i$ is not induced,
				there exist $a_1,a_2 \in V(A)$ such that $a_1a_2 \notin E(A)$ and $a_1a_2 \in E(B)$. 
		\end{rmk}

	\begin{figure}[H]
	    \centering
				\tikzstyle{black1}=[circle, draw, fill=black!100, inner sep=0pt, minimum width=4pt]
				\tikzstyle{ver}=[]
				\tikzstyle{extra}=[circle, draw, fill=black!00, inner sep=0pt, minimum width=2pt]
				\tikzstyle{edge} = [draw,thick,-]
				\tikzstyle{light} = [draw, gray,-]
				\centering
				
				\begin{tikzpicture}[scale=.36]
				
				\begin{scope}[shift={(-3,0)}]
				\node at (0,7) {$A$};    
				\draw[->] (0,6.5)--(0,0.5);
				\node[left] at (0,3) {$i$};				
				\node[right] at (0,3) {$\sim$};								
				\draw[->] (1,7) -- ( 8.5,7);
				\node at (0,0) {$B$};    		
				\draw[->] (1,0) -- ( 10.5,0);
				\node[above] at (5,7) {$g$};			
				\node[below] at (5,7) {$a_1 \mapsto 1,a_2 \mapsto 3$};	
				\end{scope}
				
				\begin{scope}[shift={(7,5)}]
				\draw[edge] (0,0) rectangle (3,3);	
				\draw [fill] (0,0) circle [radius=0.15];	
				\draw[edge] (0,-0.3) circle [radius=0.3];	
				\node[left] at (-0.1,0.1) {$1$};
				
				\draw [fill] (0,3) circle [radius=0.15];
				\draw[edge] (0,3.3) circle [radius=0.3];	
				\node[left] at (-0.1,3.1) {$2$};			
				
				\draw [fill] (3,0) circle [radius=0.15];
				\draw[edge] (3,-0.3) circle [radius=0.3];				
				\node[right] at (3.1,0.1) {$4$};
				
				\draw [fill] (3,3) circle [radius=0.15];
				\draw[edge] (3,3.3) circle [radius=0.3];	
				\node[right] at (3.1,3.1) {$3$};
				
				\draw[->] (1.5,-1) -- (1.5,-4);		
				\node[right] at (1.5,-2) {$\alpha$};			
			    \draw[->] (3.5,1.5) -- (8.75,-2); 
			    \node at (6.7,0) {$\cong$}; 
				
				\node at (1.5,-5) {$G$};
				\draw[edge] (-1,-4) -- (0,-4) -- (0,-3);
				\draw[->] (2.5,-5) -- (10,-5);
				\node at (6,-4.4) {$\text{folds to}$};
				\end{scope}    
				
				\begin{scope}[shift={(18,-1)}]
				\draw[edge] (0,0) rectangle (3,3);	
				\draw [fill] (0,0) circle [radius=0.15];	
				\draw[edge] (0,-0.3) circle [radius=0.3];	
				\node[left] at (-0.1,0.1) {$x_1$};
				
				\draw [fill] (0,3) circle [radius=0.15];
				\draw[edge] (0,3.3) circle [radius=0.3];	
				\node[left] at (-0.1,3.1) {$x_2$};			
				
				\draw [fill] (3,0) circle [radius=0.15];
				\draw[edge] (3,-0.3) circle [radius=0.3];				
				\node[right] at (3.1,0.1) {$x_4$};
				
				\draw [fill] (3,3) circle [radius=0.15];
				\draw[edge] (3,3.3) circle [radius=0.3];	
				\node[right] at (3.1,3.1) {$x_3$};
				
				\end{scope}
				\end{tikzpicture}
				\caption{}\label{fig:cobase change of non-induced inclusion}
		\end{figure}
		Consider the graph map $g: A \to \widetilde{C}_{4}$ as in Figure \ref{fig:cobase change of non-induced inclusion} that sends $a_1$ to 1, $a_2$ to 3, and rest of $A$ to $2$. Let $G$ be the pushout graph of $\{i,g\}$  and $h$ be the cobase change map of $g$ along $\alpha$.

	Suppose that the cobase change map $\alpha$ of $i$ along $g$ is a $\x$-homotopy equivalence. Since $\widetilde{C}_{4}$ is stiff, $G$ has a subgraph isomorphic to $\widetilde{C}_{4}$, say, $X$ with $V(X) = \{x_1,x_2,x_3,x_4\}$ (see \Cref{fig:cobase change of non-induced inclusion}), to which $G$ folds down. By \Cref{stiff copy}, we get a commuting triangle as in  \Cref{fig:cobase change of non-induced inclusion}, where $\cong$ denotes isomorphism. 
    Now $a_1a_2 \in E(B)$ implies that $h(a_1) h(a_2) \in E(G)$ and therefore, $\alpha(1) \alpha(3) \in E(G)$. For every $1 \le j \le 4$,  $\alpha(j)$ folds in $G$ to some vertex of $X$.  Let $\alpha(1)$ fold to $x_1$. Since $\alpha(j)$  is adjacent to $\alpha(1)$, for each $j = 2,3,4$, it must fold to a vertex in $\{x_1,x_2,x_4\} = V(X) \setminus \{x_3\}$. But this sequence of folds then cannot commute with the isomorphism as $1$ and $3$ are not adjacent in $\widetilde{C}_{4}$ . This contradicts our assumption  that $B$ is $\x$-homotopy equivalent to $A$.
    
   A short and elegant proof of the above argument based on the referee's  suggestion can be obtained by using the notion of Hom complexes and  \cite[Lemma 6.5]{x-htpy}.   	 
\begin{rmk}\label{rem:not all induced inclusions can be cofibs}
	 Next, we observe that in $(\cG , \x)$ not all induced inclusions that are $\x$-homotopy equivalences are preserved under the cobase change along every graph map. 
	
	For instance, let $C$ be the graph on the left bottom side in Figure \ref {cobase change of fold}. Consider the graph $A$ (top left) and $B$ (top right) as the induced subgraphs of $C$ (as shown in Figure \ref{cobase change of fold}) on the vertex sets $ \{a,b,c,p\}$ and $\{a,b,c\}$ respectively. Let $f : A \to B$ be the fold map that sends $p$ to $a$ and $g : A \to C$ be the inclusion. We note that $g$ is a $\x$-homotopy equivalence.

	\begin{figure}[H]
		\tikzstyle{black1}=[circle, draw, fill=black!100, inner sep=0pt, minimum width=4pt]
		\tikzstyle{ver}=[]
		\tikzstyle{extra}=[circle, draw, fill=black!00, inner sep=0pt, minimum width=2pt]
		\tikzstyle{edge} = [draw,thick,-]
		\tikzstyle{light} = [draw, gray,-]
		\centering
		
		\begin{tikzpicture}[scale=.45]
		
		\begin{scope}[shift={(0,0)}]
		\draw[edge] (0,0) -- (3,0) -- (1.5,2) -- (0,0) -- (0,-2);
		\draw [fill] (0,0) circle [radius=0.15];
		\draw [fill] (3,0) circle [radius=0.15];            
		\draw [fill] (1.5,2) circle [radius=0.15];
		\node [right] at    (1.6,1.9)    {$a$};
		\node[left ] at (0,-0.1) {$b$};
		\node[right] at (3,-0.1) {$c$};             
		\draw [fill] (0,-2) circle [radius=0.15];
		\node[left] at (0,-2.1) {$p$};        
		\end{scope}    
		
		\draw[->] (5.5,0)--(11.5,0);
		\node[above] at (8.5,0) {$f$};
		
		\draw[->] (1.6,-3)--(1.6,-6);
		\node[left] at (1.6,-4.5) {$g$};

		\begin{scope}[shift = {(14,0)}]
		\draw[edge] (0,0) -- (3,0) -- (1.5,2) -- (0,0);
		\draw [fill] (0,0) circle [radius=0.15];
		\draw [fill] (3,0) circle [radius=0.15];            
		\draw [fill] (1.5,2) circle [radius=0.15];
		\node [right] at    (1.6,1.9)    {$a$};
		\node[left ] at (0,-0.1) {$b$};
		\node[right] at (3,-0.1) {$c$};             
		\end{scope}    
		
		\draw[->] (15.6,-3)--(15.6,-6);

		
		\draw[->] (5.5,-9)--(8.5,-9);
		\draw[edge] (12,-7)--(13,-7) -- (13,-6);        
		
		\begin{scope}[shift={(0,-9)}]
		
		\draw[edge] (0,0) -- (3,0) -- (1.5,2) -- (0,0) -- (3,-2);
		\draw [fill] (0,0) circle [radius=0.15];
		\draw [fill] (3,0) circle [radius=0.15];            
		\draw [fill] (1.5,2) circle [radius=0.15];
		\node [right] at    (1.6,1.9)    {$a$};
		\node[left ] at (0,-0.1) {$b$};
		\node[right] at (3,-0.1) {$c$};             
		\draw[edge] (0,0) rectangle (3,-2);
		\draw [fill] (0,-2) circle [radius=0.15];
		\draw [fill] (3,-2) circle [radius=0.15];    
		\node[left] at (0,-2.1) {$p$};        
		\node[right] at (3,-2.1) {$q$};
		\end{scope}
		
		\begin{scope}[shift = {(14,-9)}]
		\draw[edge] (0,0) -- (3,0) -- (1.5,2) -- (0,0) -- (1.5,-2) -- (3,0);
		\draw[edge] (1.5,2) -- (1.5,-2);
		\draw [fill] (0,0) circle [radius=0.15];
		\draw [fill] (3,0) circle [radius=0.15];            
		\draw [fill] (1.5,2) circle [radius=0.15];
		\draw [fill] (1.5,-2) circle [radius=0.15];
		\node [right] at    (1.6,1.9)    {$[f(p),f(a),g(a)]$};
		\node[left ] at (0,-0.1) {$[f(b), g(b)]$};
		\node[right] at (3,-0.1) {$[f(c),g(c)]$};        
		\node[below] at (1.5,-2.1) {$[q]$};     
		\end{scope}    
		\end{tikzpicture}
		\caption{Cobase change of an induced inclusion (and of a fold)}
		\label{cobase change of fold}
	\end{figure}
	Then the pushout of $\{f,g\}$ is isomorphic to $K_4$, while $C \simeq_{\x} K_3$. Therefore the cobase change along $f$ of the induced inclusion map $g$ which is also a $\x$-homotopy equivalence  is not a $\x$-homotopy equivalence.  This example indicates that if $g$ is a composition of folds and unfolds then this property  may not be preserved under the cobase change along some graph map even if $g$ is an induced inclusion. \end{rmk}
	
	Based on the above remarks, we know that the class of cofibrations should be a subclass of induced inclusions. Further, it is clear that it has to be a proper subclass of induced inclusions. We now establish that regardless of which subclass of induced inclusions  we choose as the class of cofibrations,  the acyclic cofibrations have to be a subclass of unfolds.

\subsection{ A necessary condition for acyclic cofibrations}

	\begin{defn}\label{relative fold defn}
		Let $A$ be a subgraph of $B$. For some vertex $v \in V(B)$, a fold map $f: B \to B-v $ is called a relative fold in $B$ with respect to $A$, if $A$ is a subgraph of $B-v$.
	\end{defn}
	
	\begin{lemma}\label{cobase change fold \ifif lemma}
		Let $i:A \to B$ be an induced subgraph inclusion and a $\x$-homotopy equivalence. Let $f : B \to B-v$ be a relative fold in $B$ with respect to $A$, $j: A \to B-v$  be the subgraph inclusion and $\alpha: A \to X$ be any graph map. Then the cobase change of the inclusion map $j : A \to B-v$ along $\alpha$
	is a \xhe \ \ifif the cobase change of $i$  along $\alpha$ is.
	\end{lemma}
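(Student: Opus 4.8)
The plan is to lift the relative fold $f$ to a map between the two pushouts, recognise that lifted map as itself a fold, and then invoke the two-out-of-three property for $\x$-homotopy equivalences. Fix a graph $C$ and a map $g : A \to C$ along which both cobase changes are taken. Let $P$ be the pushout of $\{i,g\}$ with structure maps $i' : C \to P$, $g' : B \to P$, and let $Q$ be the pushout of $\{j,g\}$ with structure maps $j' : C \to Q$, $h' : B-v \to Q$; so the cobase change of $i$ is $i'$ and the cobase change of $j$ is $j'$. Since $fi = j$ and $h'j = j'g$ in $Q$, the pair $(h'f : B \to Q,\ j' : C \to Q)$ is a cocone under the span $B \xleftarrow{\,i\,} A \xrightarrow{\,g\,} C$, so by the universal property of $P$ there is a unique $\bar f : P \to Q$ with $\bar f g' = h' f$ and $\bar f i' = j'$.

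The crux is that $\bar f$ is a fold. Because $f$ is a relative fold with respect to $A$ we have $v \notin V(A)$, so $v$ is identified with nothing when $P$ is formed and its class $[v] \in V(P)$ is the singleton $\{v\}$. Writing $v'$ for the vertex of $B$ to which $v$ folds (so $\N_B(v) \subseteq \N_B(v')$), one checks from the explicit vertex and edge descriptions of a pushout of graphs that deleting $[v]$ from $P$ returns exactly $Q$, that is $P - [v] \cong Q$, and that under this isomorphism $\bar f$ fixes every vertex of $P$ other than $[v]$ and sends $[v]$ to $[v']$ (and $[v] \neq [v']$, since $v \notin [v']$). To see this is a fold it remains to check $\N_P([v]) \subseteq \N_P([v'])$: if $[u]$ is adjacent to $[v]$ in $P$ then, because $[v] = \{v\}$ and $[v]$ meets only $V(B)$, some $w \in [u]$ satisfies $wv \in E(B)$; hence $w \in \N_B(v) \subseteq \N_B(v')$, so $v'w \in E(B)$ and $[u]$ is adjacent to $[v']$ in $P$. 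Thus $\bar f$ is a fold, and in particular a $\x$-homotopy equivalence.

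Finally, $\x$-homotopy equivalences satisfy two-out-of-three — for instance because, by Dochtermann, $G \to H$ is a $\x$-homotopy equivalence precisely when $\Hom(T,G) \to \Hom(T,H)$ is a homotopy equivalence for all $T$, so the property is inherited from $Top$. Applied to the factorisation $j' = \bar f i'$ with $\bar f$ a $\x$-homotopy equivalence, this gives that $i'$ is a $\x$-homotopy equivalence if and only if $j'$ is, which is the assertion. (Alternatively, the unfold inclusion $B-v \hookrightarrow B$ induces an unfold $Q \to P$ splitting $\bar f$ and carrying $j'$ to $i'$, yielding an explicit symmetric argument.)

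I expect essentially all the work to be in the middle paragraph: pushouts of graphs can acquire edges lying in neither summand, so the identifications $P - [v] \cong Q$ and the containment $\N_P([v]) \subseteq \N_P([v'])$ must be extracted from the explicit edge description, and the hypothesis $v \notin V(A)$ is used in an essential way — if $[v]$ merged with a vertex coming from $C$, its neighbourhood in $P$ could grow and the fold condition could fail. Everything else is formal.
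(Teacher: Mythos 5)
Your proof is correct and is essentially the paper's argument: both rest on the observation that the two pushouts differ by a fold/unfold and then conclude by the two-out-of-three property of $\x$-homotopy equivalences. The only difference is packaging — the paper factors $i$ as the unfold $B-v \to B$ composed with $j$, invokes its lemma that unfolds are preserved under cobase change together with pushout pasting, whereas you build the comparison map $\bar f : P \to Q$ from the universal property and verify by hand that it is a fold (using $v \notin V(A)$), which amounts to the same verification carried out explicitly and is exactly the symmetric alternative you note in your final parenthetical.
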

	
	\begin{proof} 
		Let $g : B-v \to B$ be the unfold map corresponding to the fold map $f : B \to B-v$.
		Suppose the cobase change of  $j : A \to B-v$ along $\alpha$ is a \xhe \ for every graph map $\alpha$ defined on $A$. By Lemma \ref{cobase changes}, an unfold map is preserved under the cobase change along any graph map, therefore any cobase change of $g$ is an unfold map. In particular, any cobase change of $g$ is a \xhe, and hence any cobase change of the composite map $gj = i$ is a \xhe.
		
		Now assume that  the cobase change  of   $i: A \to B$ along any graph map  $\alpha: A\to X$ is a \xhe. We show that  the cobase change of $j : A \to B-v$  along any graph map $\alpha : A \to X$ is also a \xhe. Let $G_1$ be the pushout of $\{j,\alpha\}$, with cobase change maps $j' : X \to G_1$, and $\alpha' : B-v \to G_1$, as shown in Figure \ref{folds preserved \ifif}.
		
		\begin{figure}[H]
			\centering
			\begin{tikzcd}
				A \arrow[r, "\alpha"] \arrow[d, "j"'] & X \arrow[d, "j'"'] \\
				B-v \arrow[d, "g"'] \arrow[r, "\alpha'"] & G_1 \arrow[d, "g'"'] \\
				B \arrow[r, "\alpha''"] & G_2
			\end{tikzcd}
			\caption{}\label{folds preserved \ifif}
		\end{figure}
		
				Let $G_2$ be the pushout of $\{g, \alpha'\}$ along with cobase change maps $g' : G_1 \to G_2$,and $\alpha'' : B \to G_2$. We note that the pushout of $\{ \alpha,gj\}$ is isomorphic to $G_2$, as $j$ and $g$ are inclusions. Then $g'j'$ is a $\x$-homotopy  equivalence by assumption, and $g'$ is a $\x$-homotopy equivalence by Lemma \ref{cobase changes}. By 2 out of 3 property \cite[Lemma 5.5]{x-htpy} of $\x$-homotopy equivalences, this shows that $j'$ is a $\x$-homotopy equivalence. Thus $j$ is also preserved under the cobase change along any graph map $\alpha : A \to X$.
	\end{proof}

	\begin{prop}\label{cobase change of bad folds result}
		Let $i: A \to B$ be an induced subgraph inclusion map and a $\x$-homotopy equivalence  such that $|V(A)| < |V(B)|$. If $B$ does not fold to $A$ via some sequence of relative folds, then there exists a graph map along which the cobase change of $i$ is not a $\x$-homotopy equivalence.
	\end{prop}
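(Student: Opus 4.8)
The plan is to put $i$ into a normal form using Lemma~\ref{cobase change fold \ifif lemma}, and then exhibit an explicit bad cobase change.

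\emph{Reduction.} First I would replace $B$ by the end $B'$ of a maximal chain of relative folds $B=B_0\to B_1\to\cdots\to B'$ with respect to $A$; such a chain terminates because the vertex count strictly decreases, and $A$ stays a subgraph throughout. Since, by hypothesis, $B$ does not fold to $A$ through relative folds, $A$ is a \emph{proper} subgraph of $B'$, and as each $B_j\to B_{j+1}$ is a relative fold, iterating Lemma~\ref{cobase change fold \ifif lemma} shows that (for matching pushout data) the cobase change of $A\hookrightarrow B$ is a $\x$-homotopy equivalence exactly when the cobase change of $A\hookrightarrow B'$ is. Hence it suffices to find a bad cobase change of $A\hookrightarrow B'$, and from now on I may assume $B$ has no relative fold with respect to $A$, while $|V(A)|<|V(B)|$ and $i$ is still a $\x$-homotopy equivalence. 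Two facts follow: since $i$ is a $\x$-homotopy equivalence, $A$ and $B$ have isomorphic stiff cores, so the stiff core of $B$ has at most $|V(A)|<|V(B)|$ vertices and $B$ is not stiff; and every fold $B\to B-v$ must have $v\in V(A)$, as otherwise it would be a relative fold.

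\emph{The candidate cobase change.} Let $\rho:A\to A_0$ be a composition of folds exhibiting the stiff core $A_0\subseteq A$; since $A_0$ is an induced subgraph of $A$, hence of $B$, the map $\rho$ is a retraction. I would take the cobase change of $i$ along $\rho$, i.e.\ form the pushout $G$ of $i:A\to B$ and $\rho:A\to A_0$. By Lemma~\ref{cobase changes} the cobase change $i':A_0\to G$ is again an induced subgraph inclusion. Unwinding the pushout, $G$ is obtained from $B$ by identifying each $v\in V(A)$ with $\rho(v)\in V(A_0)$, so that $V(G)=V(A_0)\sqcup\bigl(V(B)\setminus V(A)\bigr)$ and the quotient map $q:B\to G$ satisfies $q\circ(A_0\hookrightarrow B)=i'$. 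When $A$ is already stiff (so $\rho=1_A$ and this $G$ is merely $B$), one instead uses $\alpha:A\hookrightarrow K$, the inclusion into the contractible graph $K$ on $V(A)$ with every pair of vertices adjacent and a loop at each vertex, and runs the same kind of argument with $K$ in place of $A_0$.

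\emph{Why $i'$ is not a $\x$-homotopy equivalence, and the main obstacle.} Suppose $i'$ were a $\x$-homotopy equivalence. Since $A_0\hookrightarrow B$ is a $\x$-homotopy equivalence (being the composite $A_0\hookrightarrow A\hookrightarrow B$ of $\x$-homotopy equivalences) and $i'=q\circ(A_0\hookrightarrow B)$, the two-out-of-three property forces $q:B\to G$ to be a $\x$-homotopy equivalence; as $A_0$ is stiff, $G$ would then fold onto a stiff graph isomorphic to $A_0$, that is, $G$ would fold away exactly $|V(B)|-|V(A)|$ of its vertices. I would derive a contradiction with the no-relative-fold hypothesis by pulling a fold sequence for $G$ back to a chain of relative folds of $B$ that collapses $B$ onto $A$: since $i':A_0\hookrightarrow G$ is an induced inclusion of a stiff graph, no two vertices of $A_0$ can fold onto each other inside $G$, and tracking neighbourhoods through $q$ should let one promote the folds of the ``extra'' vertices of $G$ to relative folds of $B$. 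The main obstacle is precisely this transport: collapsing $A$ onto $A_0$ can enlarge neighbourhoods of the surviving vertices and so create folds in $G$ that are invisible in $B$, so the argument needs careful bookkeeping to guarantee that under the no-relative-fold hypothesis $G$ genuinely cannot be folded down to the size of $A_0$; the same care, with a slightly different neighbourhood analysis, is needed to settle the case in which $A$ is stiff.
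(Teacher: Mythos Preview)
Your reduction step is exactly the paper's: use Lemma~\ref{cobase change fold \ifif lemma} repeatedly to assume $B$ admits no relative fold with respect to $A$, and then observe that $B$ is not stiff and every foldable vertex of $B$ lies in $A$. From this point on, however, the paper does something quite different from what you propose, and the difference matters because it sidesteps precisely the obstacle you flag.

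You push $i$ out along a \emph{collapse} of $A$ (either $\rho:A\to A_0$ onto the stiff core, or $A\hookrightarrow K$ into a complete looped graph), and then try to argue that the resulting $G$ cannot fold down far enough. As you yourself note, the quotient map $q:B\to G$ can enlarge neighbourhoods, so folds visible in $G$ need not lift to folds in $B$; your claim that ``no two vertices of $A_0$ fold onto each other in $G$'' is correct (induced inclusion of a stiff graph), but it does not rule out a vertex of $A_0$ folding to a vertex of $G\setminus A_0$, nor does it control the order in which a fold sequence for $G$ proceeds. The ``careful bookkeeping'' you allude to is not just bookkeeping: it is the whole difficulty, and nothing in the proposal explains how to do it. The stiff-$A$ case is handled only by analogy.

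The paper instead \emph{rigidifies} rather than collapses. Let $A'\subseteq V(A)$ be the set of vertices that fold in $B$ (nonempty, by the reduction). For each $x\in A'$ glue a $5$-cycle $C_5^x$ to $A$ at $x$, obtaining $C=\bigl(A\amalg_{x\in A'}C_5^x\bigr)/(x\sim x_1)$, and push $i$ out along the inclusion $A\hookrightarrow C$. Now one checks directly that the pushout $G$ is \emph{stiff}: no $C_5$-vertex can fold (a $5$-cycle, even with a loop at its attaching vertex, is stiff, and the attaching vertex $x_1$ has $x_2,x_5$ as neighbours with $\N_G(x_2)\cap\N_G(x_5)=\{x_1\}$), and no vertex of $B\setminus A'$ can fold since its $G$-neighbourhood equals its $B$-neighbourhood and by construction only vertices in $A'$ fold in $B$. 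Since $|V(C)|<|V(G)|$ and $G$ is stiff, $C\to G$ cannot be a $\times$-homotopy equivalence. No transport of folds is needed, and no case split on whether $A$ is stiff arises.
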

	
	\begin{proof}
		Suppose there exists a fold of a vertex $x$ in $B$ relative to $A$. In view of Lemma \ref{cobase change fold \ifif lemma}, the cobase change of $i : A \to B$ along $\alpha$ is a \xhe \  for every map $\alpha$ \ifif the cobase change of $A \to B-x$ is. Therefore without any loss of generality, we assume that for $i: A \to  B$, there is no relative fold in $B$ \wrt $A$. 
		
	In view of \cite[Proposition 6.6]{x-htpy},  given two graphs  $G$ and $H$  with stiff subgraphs $S_G$ and $S_H$ respectively, $S_G \simeq_{\x} S_H$ if and only if $S_G$ is isomorphic to $S_H$.  Since every graph is $\x$-homotopy equivalent to its stiff subgraph, any two $\x$-homotopy equivalent graphs must have isomorphic stiff subgraphs. Therefore $|V(A)| < |V(B)|$ implies that $B$ is not stiff. Thus there exists a $v \in V(B)$ that folds. If $v\in V(B)-V(A)$, then $A \subseteq B-v$ implies  a relative fold in $B$ \wrt  $A$ which contradicts our assumption. Thus any vertex that folds in $B$ is a vertex of $A$. 
		
		Define $A'$ to be the set consisting of vertices of $A$  which fold in $B$.
		For every $x \in A'$, we associate a 5-cycle denoted $C_5^x$ with vertex set $\{x_1,x_2,\dots, x_5\}$ and edge set $\{x_1 x_2, x_2 x_3, x_3 x_4, x_4 x_5,\\ x_5 x_1\}$. We now define a graph $C$ as follows:		$$C = \Big(A \displaystyle\coprod_{x \in A'}C_5^x \Big)/\sim $$
		where `$\sim$' denotes the identification of $x$ with $x_1$, for $x \in A' \subseteq V(A)$.
		 
		 Let $f: A \to C$ be the subgraph inclusion. Consider the pushout object $G$ of $\{i,f\}$ and the cobase change graph  map $i' : C \to G$ of $i$ along $f$.  If the cobase change $i'$ of $i: A \to B$ along $f$ is a $\x$-homotopy equivalence, then the stiff subgraphs of $C$ and $G$ are isomorphic. In particular, cardinality of vertex sets of stiff subgraphs of $C$ and $G$ are equal. We note that $|V(A)| < |V(B)|$, and $A' \subset V(A)$ implies that $|V(C)| < |V(G)|$.
		 
		 If $G$ is not stiff, then there exists a vertex $a \in V(G)$ that folds. We note that a 5-cycle $C_5$ is a stiff graph, and  any of these cycles might have gained a loop on $x_1$'s where it is identified in $G$. However, a 5-cycle with a loop on any of its vertices is also a stiff graph. Therefore, no vertex of $\displaystyle\bigcup_{x \in A'} \{x_2,x_3,x_4,x_5\}$ can fold down to any vertex in $G$. Further,  if $x_1 \in G$  folds  to some vertex in $G$ then $x_2,x_5$ must be a neighbour of that vertex. Since $\N_G(x_2)\cap \N_G(x_5)=\{x_1\}$, this is not possible.
		 
		 If $a\in G$ which folds is not a vertex of these copies of $C_5$ then $a \in V(B)-A'$ and  $\N_G(a)=\N_B(a)$. Since $B$ does not have relative folds with respect to $A$ and the vertices in $B$ which fold are elements of $A'$, there does not exist any such $a\in G$. 		 
		Therefore, the proposition follows. %
	\end{proof}
\section{Lack of model structures on \texorpdfstring{$(\cG,\x)$}{(G,x)}}

	\hspace{-0.75cm}
	\begin{tabular}{m{30em} m{2cm}}
		
		{\quad  We now prove our main theorem that  there does not exist any model structure on $(\cG,\x)$ where cofibrations are a subclass of inclusions. Define  $L_n$ to be the graph obtained from the path graph $P_n$ by adding a loop at a degree 1 vertex, say $0$, that is, $V(L_n) = \{0,1,\dots,n\}$, and $E(L_n) = \{xy: |x-y| = 1\} \cup \{00\}$. Let $p_n: L_n \to I_0$ be the graph map that sends each vertex of $L_n$ to $0$, as shown on the right. }
		
		&{  
			\begin{tikzpicture}[font=\tiny][baseline=(current bounding box.north), level/.style={sibling distance=50mm}]
			\tikzstyle{edge} = [draw,thick,-] 		
			\draw[edge] (0,2) -- (0,1);
			\draw[dotted,thick] (0,1) -- (0,0.5);
			\draw[edge] (0,0.5) -- (0,0);
			\draw[edge] (0,2.2) circle [radius=0.2];
			\draw [fill] (0,2) circle [radius=0.1];			
			\draw [fill] (0,1.5) circle [radius=0.1];						
			\draw [fill] (0,1) circle [radius=0.1];		
			\draw [fill] (0,0.5) circle [radius=0.1];									
			\draw [fill] (0,0) circle [radius=0.1];				
			
			\node[right] at (0.1,2) {$0$};
			\node[right] at (0,1.5) {$1$};
			\node[right] at (0,1) {$2$};
			\node[right] at (0,0.5) {$n-1$};			
			\node[right] at (0,0) {$n$};
			
			\draw[edge, ->] (0,-0.3) -- (0,-0.9);
			
			\draw[edge] (0,-1.3) circle [radius=0.2];
			\draw [fill] (0,-1.5) circle [radius=0.1];	
			\node[right] at (0.1,-1.5) {$0$};								
			\node[right] at (0,-0.6) {$p_n$};											
			\end{tikzpicture}
}
	\end{tabular}

	 \begin{lemma} \label{unfold has llp wrt L_n to loop}
	Every unfold map has the \llp \ \wrt \ $p_n$ for every $n$.
	\end{lemma} 
	\begin{proof}
		Let $i: X \to X \cup x$ be an unfold map, and $f : X \to L_n$, $g : X \cup x \to I_0$ be any graph maps such that $p_n f = gi$ (see \Cref{LLP wrt L_n to i_0}).
		 Since image of $p_n$ is a single looped vertex, $g(x) = g(x')$, where $x'$ is a vertex from which $x$ is unfolded. Then the map $F : X\cup x \to L_n$ defined as $F|_{X} = f$ and $F(x) = f(x')$, is a graph map that satisfies $Fi = f$ and $p_n F = g$. This implies that every unfold map has \llp \ \wrt \ $p_n$.

	\begin{figure}[H]
		\centering
		\begin{tikzcd}
			X \arrow[r, "f"] \arrow[d, "i"'] & L_n \arrow[d, "p_n"] \\
			X \cup x \arrow[r, "g"] \arrow[ru, "F", dotted] & I_0
		\end{tikzcd}
		\caption{The map $p_n: L_n \to I_0$ has the right lifting property with respect to any unfold} \label{LLP wrt L_n to i_0}
	\end{figure}
	\end{proof}

	\begin{cor}\label{cor:p_n is acyclic fibration}
		If $\mathcal{M}$ is a model structure on $\cG$ whose weak equivalences are $\x$-homotopy equivalences, and every acyclic cofibration is a  composition of unfolds,  then the graph map $p_n$ defined above is an acyclic fibration.
	\end{cor}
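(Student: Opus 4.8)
The plan is to check that $p_n$ satisfies the two defining conditions of an acyclic fibration in $\mathcal M$: that it is a weak equivalence, and that it is a fibration.

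For the weak equivalence, I would first observe that $L_n$ is contractible. The vertex $n$ folds onto $n-2$ (onto $0$ when $n\le 2$), since $\N_{L_n}(n)=\{n-1\}\subseteq\N_{L_n}(n-2)$, and deleting $n$ leaves $L_{n-1}$; iterating, $L_n$ folds down through $L_{n-1},L_{n-2},\dots$ to the single looped vertex $I_0$. The composite of this chain of fold maps is a graph map $L_n\to I_0$, which is forced to be $p_n$ because $I_0$ has only one vertex. Since each fold is a $\x$-homotopy equivalence and $\x$-homotopy equivalences are closed under composition, $p_n$ is a $\x$-homotopy equivalence, hence a weak equivalence of $\mathcal M$.

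For the fibration part, I would use the standard fact (cf.\ \cite{dwyer}) that in any model category a map is a fibration precisely when it has the right lifting property against every acyclic cofibration. By hypothesis the cofibrations of $\mathcal M$ are the compositions of unfolds, so every acyclic cofibration is in particular a composition of unfolds. Lemma \ref{unfold has llp wrt L_n to loop} provides a lift against each individual unfold, i.e.\ every unfold has the left lifting property with respect to $p_n$; and the class of maps having the left lifting property against a fixed map is closed under composition. Hence every composition of unfolds, and a fortiori every acyclic cofibration, has the left lifting property with respect to $p_n$, which is to say $p_n$ has the right lifting property against all acyclic cofibrations. So $p_n$ is a fibration, and, being also a weak equivalence, it is an acyclic fibration.

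The two steps are each short. The only points needing a little care are the bookkeeping that the fold sequence of $L_n$ terminates at $I_0$ with composite exactly $p_n$, and the fact that, since fibrations are not defined by a lifting property, one has to route the fibration argument through the ``fibration $=$ right lifting property against acyclic cofibrations'' characterization; the lifting argument itself is purely formal once Lemma \ref{unfold has llp wrt L_n to loop} is in hand, which is really the reason the corollary holds. One could alternatively skip establishing the weak equivalence separately: in such an $\mathcal M$ every unfold, and hence every cofibration, is a $\x$-homotopy equivalence, so the same lifting argument shows $p_n$ has the right lifting property against all cofibrations, which is exactly the characterization of an acyclic fibration.
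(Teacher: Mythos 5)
Your proof is correct and follows essentially the same route as the paper: the lifting property against unfolds from Lemma \ref{unfold has llp wrt L_n to loop}, the fold of $L_n$ down to $I_0$ making $p_n$ a weak equivalence, and then the model-category lifting characterizations. You simply spell out the details (closure of lifting classes under composition, the acyclic-cofibration characterization of fibrations) that the paper compresses into ``by the axioms of model structure.''
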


	\begin{proof}
      	Note that $L_n$ folds to $I_0$, and hence for any $n$, $p_n$ is a $\x$-homotopy equivalence.
		By  Lemma \ref{unfold has llp wrt L_n to loop}, $p_n$ lifts on the right of every unfold. 
		Therefore, for every $n \in \mathbb{N}$, $p_n$ will lift against all compositions of unfolds and hence against acyclic cofibrations. Since in a model structure any map which lifts against all acyclic cofibrations is a fibration \cite[Prop 3.13]{dwyer} we get that $p_n$ is an acyclic fibration.  
	\end{proof}
	
	\begin{defn}

		Distance between any two vertices $u,v$ of a graph $G$ is the number of edges in a shortest length path connecting the two vertices, denoted $dist_G(u,v)$. For two subgraphs $G_1,G_2$ of $G$, the {\it distance} between them, $dist_G(G_1,G_2)$, is defined 
		as $$dist_G(G_1,G_2) = \min \{dist_G(u,v): u \in V(G_1), v \in V(G_2)\}.$$
	\end{defn}

	\begin{lemma}\label{lem:pn not lifting}
    	Let $i:K_2 \to C_n$  be an inclusion and $n$ odd. Then $i$ does not lift against $p_{t}$ for any natural number $t>n$.
	\end{lemma}
	\begin{proof}
	   Define $g : C_n \to I_0 $ to be the constant graph map.  Define $f : K_2 \to L_t$ as $f(1) = t, f(2) = t-1$. Since both  $p_t$ and $g$ are constant maps, $p_t f = gi$. However, $t > n$ implies that given any graph map $F : C_n \to L_t$ such that  $p_t F =g$ and $F i =f$, image of $F$ does not include the vertex $0$. Therefore, $Im(F)$ is a simple subgraph of $L_t$ and in particular, is a path graph say $P_l$. Thus existence of $F$ implies that there is a map from an odd cycle $C_n$ onto a path graph. This is not possible, since a graph map $C_{n} \to P_l$ implies that the chromatic number of $C_n$ cannot be bigger than the chromatic number of $P_l$. 
	\end{proof}
	More generally, the above argument can be used to prove that for any graph $B$, if $f: K_2 \to B$ factors through an odd cycle then it does not lift against $p_k$ for some $k$.
	
	\begin{thm}\label{cof as any subclass of induced inclusions then no model structure}
		If the class of cofibrations is any subclass of induced subgraph inclusions, then there does not exist a model category structure on $(\cG,\x)$.
	\end{thm}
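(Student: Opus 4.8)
The plan is to argue by contradiction: suppose there is a model structure $\mathcal{M}$ on $\cG$ whose weak equivalences are the $\x$-homotopy equivalences and whose cofibrations form some class of induced subgraph inclusions. I would first pin down the acyclic cofibrations. Such a map $i\colon A\to B$ is in particular an induced subgraph inclusion and a $\x$-homotopy equivalence. If $|V(A)|=|V(B)|$ it is an isomorphism. If $|V(A)|<|V(B)|$, then since acyclic cofibrations are stable under cobase change (\cite[Proposition 3.14]{dwyer}), every pushout of $i$ is again a $\x$-homotopy equivalence; by the contrapositive of Proposition~\ref{cobase change of bad folds result} this forces $B$ to fold to $A$ through a sequence of relative folds, i.e.\ $i$ is a composition of unfolds. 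Hence every acyclic cofibration of $\mathcal{M}$ is a composition of unfolds (and isomorphisms).

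Next, exactly as in the Corollary preceding the theorem, $p_n\colon L_n\to I_0$ is an acyclic fibration for every $n$: by Lemma~\ref{unfold has llp wrt L_n to loop} it has the right lifting property with respect to every unfold, hence with respect to every composition of unfolds, hence with respect to every acyclic cofibration, so it is a fibration; and since $L_n$ folds to $I_0$ it is a weak equivalence, so it is an acyclic fibration. Consequently each $p_n$ has the right lifting property with respect to every cofibration of $\mathcal{M}$.

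The final step uses the factorization axiom to produce the cofibration needed for the lifting argument of the paragraph preceding the theorem (which cannot be applied to $i\colon K_2\to B$ directly, since $i$ need not be a cofibration). Fix a connected non-bipartite graph $B$, pick any graph map $K_2\to B$, and factor it as $K_2\xrightarrow{\,\iota\,}W\xrightarrow{\,q\,}B$ with $\iota$ a cofibration and $q$ an acyclic fibration. Since $q$ is a weak equivalence, $W\simeq_\x B$, so $W$ is connected and $\chi(W)=\chi(B)\ge 3$. Put $n=\diam(W)+2$ and define $f\colon K_2\to L_n$ by $f(1)=n,\ f(2)=n-1$; together with the constant map $g\colon W\to I_0$ this gives a commutative square with left edge $\iota$ and right edge $p_n$. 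As $\iota$ is a cofibration and $p_n$ is an acyclic fibration, there is a lift $F\colon W\to L_n$ with $F\iota=f$, so $F(\iota(1))=n$. Graph maps do not increase shortest-path distance and $d_{L_n}(n,0)=n>n-2=\diam(W)$, so no vertex of $W$ maps to $0$; hence $F$ factors through the induced path $L_n-\{0\}$, forcing $\chi(W)\le 2$ — equivalently $F$ collapses an odd cycle of $W$ onto a path, which is impossible. This contradiction shows that no such model structure exists.

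I expect the main obstacle to be the justification that acyclic cofibrations must be compositions of unfolds — this is the step that genuinely exploits the structure theory built up in Proposition~\ref{cobase change of bad folds result} together with stability of acyclic cofibrations under cobase change — and, secondarily, the realization that the factorization axiom is precisely what replaces the possibly-non-cofibration inclusion $K_2\to B$ by a genuine cofibration $K_2\to W$ into a graph carrying the same chromatic obstruction.
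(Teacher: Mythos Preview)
Your argument is correct and is essentially the paper's own proof: constrain acyclic cofibrations to compositions of unfolds via Proposition~\ref{cobase change of bad folds result} together with cobase-change stability, deduce that every $p_n\colon L_n\to I_0$ is an acyclic fibration, then factor $K_2\to B$ (the paper takes $B=C_k$ an odd cycle, but this is inessential) through a cofibration $\iota\colon K_2\to W$ and derive a contradiction from the lift against $p_n$ with $n>\diam(W)$. One small imprecision worth tightening: $W\simeq_\x B$ does not by itself force $W$ to be connected (isolated vertices may appear under folding), but since $\iota(K_2)$ is an edge it lies in the unique component of $W$ carrying all edges and hence every odd cycle, so reading $\diam(W)$ as the diameter of that component repairs the distance estimate; the paper's own argument glosses over the same point.
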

	\begin{proof}
	Suppose  $(\cG,\x)$ has a model category structure with the class of cofibrations $\cC$ as a subclass of all induced subgraph inclusions. By Proposition \ref{cobase change of bad folds result}, the class of acyclic cofibrations has to be a subclass of all compositions of unfolds.
	Further by Corollary \ref{cor:p_n is acyclic fibration}, we know that in any such model structure, the graph map $p_n: L_n \to I_0$ must be an acyclic fibration.

	Consider $f : K_2 \to  C_n$, for $n$ odd. 
    Clearly, this is not a $\x$-homotopy equivalence. Further, $f$ is an inclusion  by \Cref{lem:pn not lifting}, we see that $f$ is not a cofibration. 
    Then $f$ can be factored into a cofibration $K_2 \to B$ and an acyclic fibration $B \to C_n$, with neither of these maps being isomorphisms.
            
    Since any cycle graph $C_n$ for $n \ne 4$ is a stiff graph, $B$ has an isomorphic copy of $C_n$, say $\widetilde{C_n}$, to which the image of $K_2$, say $\widetilde{K_2}$, in $B$ must fold. If $\widetilde{K_2}$ is in $\widetilde{C_n}$, then  \Cref{lem:pn not lifting} implies $f$ cannot be a cofibration. If  $\widetilde{K_2}$ is not in $\widetilde{C_n}$, then  the edge $v_1v_2$ of $\widetilde{K_2}$ will fold down via a sequence of folds to an edge in $\widetilde{C_n}$.  Let the distance  from $v_1v_2$ to the edge it folds down to in $\widetilde{C_n}$ be $r$. Then an  argument similar to one used  in \Cref{lem:pn not lifting} will show that  $K_2 \to B$ does not lift against $p_{r+2+t }$ for all natural numbers $t>n$. This implies $K_2 \to C_n$ cannot be factorized and  
		hence there is no such model structure on $(\cG,\x)$ with $\cC$ as the cofibrations.
	\end{proof}

  As elucidated earlier, the non-existence of a model structure on $(\cG,\x)$ where the class of cofibrations are a subclass of inclusions arises from the lack of compatibility with any class of fibrations. Going forward, it is reasonable to look for a structure which allows us to define homotopy pushouts with a less rigid structure. One such structure which can facilitate the existence of homotopy pushouts is that of a cofibration category (for the definition, refer to \cite{cofibration-category-defn}). In our forthcoming article we explore the existence of a  cofibration category structure on $(\cG,\x)$.

\subsection*{Acknowledgements} We thank the anonymous referee for several suggestions which has improved this article.


\end{document}